\documentclass{article}
\relpenalty=10000 
\binoppenalty=10000 

\usepackage[german,english]{babel} 
\usepackage[utf8]{inputenc}
\usepackage{mathtools}
\usepackage{amssymb}
\usepackage{amsthm}

\newtheorem{theorem}{Theorem}[section]
\newtheorem{definition}[theorem]{Definition}
\newtheorem{corollary}[theorem]{Corollary}
\newtheorem{lemma}[theorem]{Lemma}
\newtheorem{proposition}[theorem]{Proposition}

\theoremstyle{remark}
\newtheorem*{remark}{Remark}

\newcommand{\naturals}{\ensuremath{\mathbb{N}}}

\title{A total Solovay reducibility and totalizing of the notion of speedability}
\author{Wolfgang Merkle and Ivan Titov}
\date{March 2021}

\setlength{\textheight}{20.5cm}
\setlength{\textwidth}{13.5cm}

\begin{document}

\maketitle

\newcommand{\defhigh}[1]{\textsc{#1}}

\begin{abstract}
While the set of Martin-Löf random left-c.e.\ reals is equal to the maximum degree of Solovay reducibility, Miyabe, Nies and Stephan~\cite{miyabe-etal-2018} have shown that the left-c.e.\ Schnorr random reals are not closed upwards under Solovay reducibility. Recall that for two left-c.e.\ reals~$\alpha$ and~$\beta$, the former is Solovay reducible to the latter in case there is a partially computable function~$\varphi$ and constant~$c$ such that for all rational numbers~$q < \beta$ we have
\[\alpha - \varphi(q) < c(\beta - q).\]
By requiring the translation function $\varphi$ to be total, we introduce a total version of Solovay reducibility that implies Schnorr reducibility. Accordingly, by Downey and Griffiths~\cite{downey-griffiths-2004}, the set of Schnorr random left-c.e.\ reals is closed upwards relative to total Solovay reducibility.

Furthermore, we observe that the notion of speedability introduced by Merkle and Titov~\cite{Merkle-Titov-2020} can be equivalently characterized via partial computable translation functions in a way that resembles Solovay reducibility. 
By requiring the translation function to be total, we obtain the concept of total speedability. Like for speedability, this notion does not depend on the choice of the speeding constant. 
\end{abstract}


\section{A total version of Solovay reducibility}

We first review the usual definition of Solovay reducibility as a reducibility via a partially computable rational-valued function. For any unexplained notions, see Downey and Hirschfeldt~\cite{downey-hirschfeldt-2010}. 
\begin{definition}[Solovay~\cite{Solovay-1975}, 1975] 
Let~$\alpha$ and~$\beta$ be reals and let~$c> 0$ be a rational number. Then~$\alpha$  is \defhigh{Solovay reducible} to~$\beta$ \defhigh{with respect to a constant~$c$}, written $\alpha \leq_{S,c} \beta$, if there is a partial computable function $\varphi \colon  \mathbb{Q} \rightarrow \mathbb{Q}$ such that for all $q<\beta$ it holds that $\varphi(q)\downarrow<\alpha$ and $\alpha-\varphi(q)<c(\beta-q)$. The real~$\alpha$  is \defhigh{Solovay reducible} to $\beta$,  written $\alpha \leq_{S} \beta$, if $\alpha$  is Solovay reducible to~$\beta$ with respect to some~$c$.
\end{definition} 
In case~$\alpha \leq_{S} \beta$, we will also say that~$\alpha$ is \defhigh{$S$-reducible} to~$\beta$, and similar notation will be used for other reducibilities introduced in what follows.

\begin{definition}[\defhigh{total Solovay reducibility, $\leq^{tot}_{S,c}$}] 
A real $\alpha$  is \defhigh{total Solovay reducible} to a real $\beta$ \defhigh{with respect to a constant} $c$, written $\alpha \leq^{tot}_{S,c} \beta$, if there is a computable function $f \colon \mathbb{Q} \rightarrow \mathbb{Q}$ such that for all~$q<\beta$ it holds that $f(q)<\alpha$ and $\alpha-f(q)<c(\beta-q)$. The real $\alpha$  is \defhigh{total Solovay reducible} to~$\beta$,  written $\alpha \leq_S^{tot} \beta$, if~$\alpha$  is total Solovay reducible to~$\beta$ with respect to some~$c$.
\end{definition} 

The total Solovay reducibility obviously implies the normal one, thus, the Martin-Löf random left-c.e.\ reals are closed upwards relative to the total Solovay reducibility. One can proof that, in general case, the total Solovay reducibility implies the $\leq_S^{1a}$-reducibility defined by Zheng and Rettinger in~\cite{Rettinger-Zheng-2004}.

\section{The structural properties of the total Solovay degrees on the left-c.e.\ reals}
In this section, we argue that total Solovay reducibility is in $\Sigma^0_3$ but is not a standard reducibility in the sense of Downey and Hirschfeldt~\cite{downey-hirschfeldt-2010} because neither is addition a join operator nor is there a least degree. 
\begin{proposition}
Total Solovay reducibility is in $\Sigma^{0}_{3}$.
\end{proposition}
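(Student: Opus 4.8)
The plan is to work with a fixed standard enumeration $(\alpha_a)_a$ of the left-c.e.\ reals, where each $\alpha_a$ comes equipped with a uniformly computable nondecreasing sequence of rationals $(\alpha_{a,s})_s$ with $\alpha_a = \sup_s \alpha_{a,s}$, and likewise $(\beta_{b,s})_s$ for $\beta_b$; the goal is to show that $\{(a,b) : \alpha_a \leq_S^{tot} \beta_b\}$ is $\Sigma^0_3$. Unfolding the definition, $\alpha \leq_S^{tot} \beta$ holds iff there are a positive rational $c$ and an index $e$ such that $\varphi_e$ is total and for every rational $q < \beta$ one has $\varphi_e(q) < \alpha$ and $\alpha - \varphi_e(q) < c(\beta - q)$. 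Since the two leading existential number quantifiers (over $c$ and $e$) can be merged into one, it suffices to prove that the matrix behind them, namely ``$\varphi_e$ is total, and $\forall q\,\bigl(q < \beta \to (\varphi_e(q) < \alpha \wedge \alpha - \varphi_e(q) < c(\beta - q))\bigr)$'', is $\Pi^0_2$ uniformly in $a,b,c,e$; prefixing one existential number quantifier to a $\Pi^0_2$ matrix then yields $\Sigma^0_3$.

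The decisive step is to replace the strict inequality by a non-strict one, which does not change the relation being defined: if $\alpha - \varphi_e(q) \le c(\beta - q)$ holds for all rational $q < \beta$, then, because $\beta - q > 0$ for such $q$, the strict inequality $\alpha - \varphi_e(q) < c'(\beta - q)$ holds with $c' = c + 1$, and the converse implication is immediate. This reformulation is what controls the complexity: for left-c.e.\ reals $x$ and $y$ the relation $x < y$ is $\Sigma^0_2$, since $x < y$ iff there is a rational $p$ with $x \le p$ and $p < y$, i.e.\ $\exists p\,\exists t\,\forall s\,(x_s \le p \wedge p < y_t)$, whence $x \le y$ is $\Pi^0_2$. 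Applying this with $x = \alpha$ and $y = c\beta + (\varphi_e(q) - cq)$ (a left-c.e.\ real with approximations $c\beta_{b,s} + \varphi_e(q) - cq$), the non-strict inequality $\alpha - \varphi_e(q) \le c(\beta - q)$ is $\Pi^0_2$, whereas the original strict inequality is only $\Sigma^0_2$.

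To prevent the dependence on the value of $\varphi_e(q)$ from introducing a spurious existential quantifier, I would write each value-dependent clause with a universal quantifier over the candidate value: $\varphi_e(q) < \alpha$ as $\forall v\,(\varphi_e(q) = v \to v < \alpha)$ and the non-strict inequality as $\forall v\,(\varphi_e(q) = v \to \alpha \le c\beta + v - cq)$. As $\varphi_e(q) = v$ is $\Sigma^0_1$, each clause is then of the form $\forall v\,(\Pi^0_1 \vee \Pi^0_2) = \Pi^0_2$, and under the standing assumption that $\varphi_e$ is total these formulas correctly assert the stated property of the unique value $\varphi_e(q)$. The guard $q < \beta$ is $\Sigma^0_1$, so its negation is $\Pi^0_1$ and $q < \beta \to (\ldots)$ stays $\Pi^0_2$; the universal number quantifier $\forall q$ preserves $\Pi^0_2$, and ``$\varphi_e$ is total'' is itself $\Pi^0_2$, so the whole matrix is $\Pi^0_2$ and the relation is $\Sigma^0_3$. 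I expect the only real obstacle to be exactly this bookkeeping around the inequalities: handled naively, the strict inequality contributes a $\Sigma^0_2$ clause under $\forall q$, giving a $\Pi^0_3$ matrix and an overall $\Sigma^0_4$ bound, so the passage to the non-strict inequality (legitimate because $q < \beta$ forces $\beta - q > 0$) together with the universal-value phrasing of the clauses involving $\varphi_e(q)$ are precisely what is needed to pin the matrix down to $\Pi^0_2$.
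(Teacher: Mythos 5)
Your proof is correct, and at the top level it follows the same plan as the paper: both arguments unfold $\alpha \leq_S^{tot} \beta$ into an explicit arithmetical formula over stage-wise approximations, with one leading existential quantifier over the pair $\langle e,c\rangle$ in front of what is claimed to be a $\Pi^0_2$ matrix. The difference lies entirely in how the delicate clause $\alpha - \varphi_e(q) < c(\beta - q)$ is tamed. The paper's displayed formula has the shape $\exists\langle e,c\rangle\,\forall\langle q,s\rangle\,\exists\langle r,t\rangle(\cdots)$ and replaces $\alpha$ by an existentially quantified stage $a_r$ required to satisfy both $a_r - \varphi_e(q) > 0$ and $a_r - \varphi_e(q) < c(b_s - q)$; read literally, this asserts that some approximation $a_r$ lands in the window $(\varphi_e(q),\,\varphi_e(q) + c(b_s - q))$ for every stage $s$ with $q < b_s$, which neither obviously follows from the definition (for $q$ just below $b_s$ the window can be too narrow to contain any $a_r$, even for a genuine witness $f$) nor obviously implies it (it bounds a single $a_r$ rather than $\alpha = \sup_r a_r$). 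Your route --- weakening the strict inequality to $\alpha - \varphi_e(q) \le c(\beta - q)$, which leaves the relation unchanged because the slack is reabsorbed by passing from $c$ to $c+1$, and then exploiting that $\le$ between uniformly left-c.e.\ reals is $\Pi^0_2$ whereas $<$ is only $\Sigma^0_2$ --- is exactly what pins the matrix down to $\Pi^0_2$ honestly, and your universal-quantifier phrasing of the clauses involving the value $\varphi_e(q)$ keeps totality and value-dependence from contributing extra alternations. In short, you reach the same $\Sigma^0_3$ bound by a more careful execution of the same counting argument, and your version repairs precisely the step that the paper's one-line formula glosses over.
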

\begin{proof}
Let $\alpha^0,\alpha ^1,...$ be an effective enumeration of left-c.e.\ reals, where we can assume that, for given~$n$, on can compute a recursive index for a nondecreasing 
approximation~$a_0^n, a_1^n, \ldots$ to~$\alpha^n$ from below. Then we have
\begin{align*}
\alpha^a \leq_S^{tot}\alpha^b \iff& \exists \langle e,c\rangle \forall \langle q,s\rangle \exists \langle r,t \rangle \colon \\
&\bigg(\varphi_e(q)[t]\downarrow 
\wedge(q<b_s\implies\big(a_r-\varphi_e(q)>0\wedge a_r-\varphi_e(q)< c(b_s-q)\big)\bigg).
\end{align*}
\end{proof}

\begin{proposition}
Let~$\alpha$ be a left-c.e.\ real, and let~$r>0$ be a rational number. Then it holds that~$ r \alpha\equiv_S^{tot} \alpha$.
\end{proposition}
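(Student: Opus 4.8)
The plan is to establish the two reductions $r\alpha \leq_S^{tot} \alpha$ and $\alpha \leq_S^{tot} r\alpha$ separately, in each case by exhibiting an explicit total computable translation function that is linear in $q$. Since $r$ is a positive rational, both $q \mapsto rq$ and $q \mapsto q/r$ are total computable functions from $\mathbb{Q}$ to $\mathbb{Q}$, and the key observation is that scaling a real by $r$ scales the Solovay defect $\beta - q$ by a fixed factor, so the defining inequality survives after adjusting the constant.

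For the first reduction I would take $f(q) = rq$ together with any rational constant $c$ with $c > r$, say $c = r + 1$. For every rational $q < \alpha$ one has $f(q) = rq < r\alpha$ because $r > 0$, so the first requirement of total Solovay reducibility holds; and $r\alpha - f(q) = r(\alpha - q) < c(\alpha - q)$, since $\alpha - q > 0$ and $r < c$, which is exactly the required defect bound with $\beta = \alpha$. This yields $r\alpha \leq^{tot}_{S,c} \alpha$.

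For the converse reduction I would symmetrically take $g(q) = q/r$ together with any rational constant $c'$ with $c' > 1/r$, say $c' = 1/r + 1$. For every rational $q < r\alpha$ one has $g(q) = q/r < \alpha$, and $\alpha - g(q) = (r\alpha - q)/r = (1/r)(r\alpha - q) < c'(r\alpha - q)$, since $r\alpha - q > 0$ and $1/r < c'$. Hence $\alpha \leq^{tot}_{S,c'} r\alpha$, and combining the two reductions gives $r\alpha \equiv_S^{tot} \alpha$.

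I do not expect a genuine obstacle here: the translation functions are total precisely because $r$ is rational, and the only points requiring care are the strictness of the inequalities (guaranteed by $q < \alpha$, respectively $q < r\alpha$, which makes the relevant defects strictly positive) and the correct choice of the multiplicative constants $c > r$ and $c' > 1/r$ that absorb the scaling factor. If desired, one could note in passing that $r\alpha$ is again left-c.e., with nondecreasing approximation $(r a_n)_n$ obtained from an approximation $(a_n)_n$ of $\alpha$, so that the statement remains within the intended class of left-c.e.\ reals.
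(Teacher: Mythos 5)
Your proof is correct and takes the same direct approach as the paper: exhibit an explicit total computable translation function for each direction and absorb the scaling into the constant. In fact your choice of translation functions $q\mapsto rq$ and $q\mapsto q/r$ is the more careful one: the paper claims the reduction $r\alpha\leq_S^{tot}\alpha$ works via the identity function with constant $r$, but the identity map need not satisfy the requirement $f(q)<r\alpha$ for all $q<\alpha$ when $r<1$ (e.g.\ $\alpha=1$, $r=1/2$, $q=0.9$), whereas your scaled map handles this automatically, and your explicit choice of $c>r$ (rather than $c=r$) correctly respects the strictness of the defect inequality.
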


\begin{proof}
It holds that~$r \alpha\leq_S^{tot} \alpha$ via the identify function and constant~$r$, and similarly for a reduction in the reverse direction with constant~$1/r$.
\end{proof}

Next we review the notion of a hyperimmune set.
\begin{definition}
Let~$A$ be an infinite set. By~$p_A$, we denote the principal function of~$A$, i.e., the members of~$A$ are~$p_A(0)< p_A(1) < \cdots$. Let~$k_A(n)$ be the least member of~$A\setminus \{0, \ldots, n-1\}$. 
\end{definition}
Recall that a set~$A$ is~\defhigh{hyperimmune} if~$p_A$ is not majorized by a computable function, i.e., for no computable function~$g$ we have~$p_A(n)\le g(n)$ for all~$n$. 
\begin{lemma}\label{lemma:principal-function}
For any set~$A$, the following assertions are equivalent:
\begin{itemize}
\item[(i)] $p_A$ is not majorized by any computable function,
\item[(ii)] $k_A$ is not majorized by any computable function.
\end{itemize}
\end{lemma}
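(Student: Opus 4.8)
The plan is to establish the two directions separately via their contrapositives, exploiting the tight arithmetic relationship between $p_A$ and $k_A$. The starting observation is the pointwise inequality $k_A(n) \le p_A(n)$, valid for every $n$: since $p_A(0) < p_A(1) < \cdots < p_A(n)$ are $n+1$ distinct members of $A$, we have $p_A(n) \ge n$, so $p_A(n)$ itself lies in $A \setminus \{0,\dots,n-1\}$ and hence witnesses $k_A(n) \le p_A(n)$. This bound immediately yields the implication from (ii) to (i): any computable $g$ with $p_A(n) \le g(n)$ for all $n$ also satisfies $k_A(n) \le g(n)$ for all $n$, so if $k_A$ is not majorized by any computable function, then neither is $p_A$.

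For the converse, (i) implies (ii), I would argue contrapositively: assuming a computable function majorizes $k_A$, I would produce a computable function majorizing $p_A$. First I would note that we may take the given majorant $h$ to be nondecreasing, by replacing it with its running maximum $m \mapsto \max_{i \le m} h(i)$, which is still computable and still majorizes $k_A$. The key structural fact is the recursion $p_A(0) = k_A(0)$ and $p_A(n+1) = k_A(p_A(n)+1)$, which holds because the element of $A$ immediately following $p_A(n)$ is exactly the least member of $A$ that is at least $p_A(n)+1$. I would then define a computable $g$ by the analogous recursion $g(0) = h(0)$ and $g(n+1) = h(g(n)+1)$, and verify $p_A(n) \le g(n)$ for all $n$ by induction: the base case is $p_A(0) = k_A(0) \le h(0) = g(0)$, and for the step, using $k_A \le h$, the induction hypothesis $p_A(n) \le g(n)$, and the monotonicity of $h$, we get $p_A(n+1) = k_A(p_A(n)+1) \le h(p_A(n)+1) \le h(g(n)+1) = g(n+1)$.

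The two directions are genuinely asymmetric, and I expect the only real content to sit in the implication from (i) to (ii). The crux there is recognizing that one cannot bound $p_A(n)$ by a single application of the majorant $h$ at the argument $n$, since $p_A(n)$ can grow much faster than $k_A(n)$; instead one must reconstruct $p_A$ from $k_A$ by the iteration above and transfer that iteration to $h$. This is precisely why passing to a \emph{nondecreasing} majorant is not a cosmetic step but is needed to make the inductive comparison $h(p_A(n)+1) \le h(g(n)+1)$ go through. Everything else is routine.
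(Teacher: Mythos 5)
Your proof is correct and follows essentially the same route as the paper: the easy direction comes from the pointwise bound $k_A(n)\le p_A(n)$, and the substantive direction majorizes $p_A$ by iterating a nondecreasing computable majorant of $k_A$, exactly as in the paper's $n$-fold-application construction. Your version is in fact slightly more careful, since the explicit recursion $p_A(n+1)=k_A(p_A(n)+1)$ and the matching $g(n+1)=h(g(n)+1)$ avoid the off-by-one issue latent in the paper's bare iterate $g(g(\cdots g(0)\cdots))$.
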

\begin{proof}
If the computable function~$g(n)$ majorizes $k_A(n)$, where we can assume that~$h$ in nondecreasing, then the desired computable function that majorizes $p_A(n)$ is given by
\[
n \mapsto \underbrace{g(g(...(g(0))...))}_{\text{$n$-fold application of~$g$}}.
\] 
Conversely, in case the computable function~$g(n)$ majorizes~$p_A(n)$, then the function $n \mapsto g(n+1)$ majorizes~$k_A(n)$.
\end{proof}
\begin{proposition}
There exists no least degree in the total Solovay degrees.
\end{proposition}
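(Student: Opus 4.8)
The plan is to argue that a least total Solovay degree, if it existed, would have to consist of computable reals, and then to rule out the computable reals themselves by producing a single left-c.e.\ real to which no computable real is total Solovay reducible. For the first step I would observe that total Solovay reducibility to a \emph{computable} real forces computability: if $\alpha \leq_S^{tot} \beta$ via a total function $f$ and constant $c$, and $\beta$ is computable, then given a target precision $\varepsilon$ one computes a rational $q < \beta$ with $\beta - q < \varepsilon/c$, after which $f(q)$ is a rational satisfying $f(q) < \alpha$ and $\alpha - f(q) < c(\beta - q) < \varepsilon$, i.e.\ a computable $\varepsilon$-approximation of $\alpha$ from below. Since a least degree is in particular total Solovay reducible to the computable real $0$, it must be computable.

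It therefore suffices to show that the degree of the computable reals is not least, that is, to exhibit one left-c.e.\ real $\beta$ such that no computable real is total Solovay reducible to it. Here I would use the reformulation that, for computable $\alpha$, one has $\alpha \leq_S^{tot} \beta$ if and only if there is a total computable $h \colon \mathbb{Q} \to \mathbb{Q}$ with $q < h(q) < \beta$ for every rational $q < \beta$: from a witness $(f,c)$ one sets $h(q) = q + (\alpha - f(q))/c$, and conversely from such an $h$ one sets $f(q) = \alpha - (h(q)-q)$ with $c = 1$, the defining inequalities translating into one another directly. Thus the task reduces to constructing a left-c.e.\ real $\beta$ admitting no such total \emph{improvement function}, i.e.\ one for which every total computable $\varphi_e$ has a rational $q < \beta$ with $\varphi_e(q) \leq q$ or $\varphi_e(q) \geq \beta$.

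I would build $\beta = \lim_s \ell_s$ as the limit of a nondecreasing computable sequence of rationals, diagonalizing against all $\varphi_e$, while fixing a hyperimmune set $A$ so that, by Lemma~\ref{lemma:principal-function}, $k_A$ is not majorized by any computable function. Requirement $R_e$ is addressed by choosing a test point $q_e$ equal to the current value of $\ell$, waiting for $\varphi_e(q_e)$ to converge, and, if it converges to a value $v > q_e$, capping all further growth of $\beta$ below $v$; if it converges to $v \leq q_e$ the requirement is already met. Keeping every later test point and cap inside the interval opened by the previous cap then forces $q_e < \beta \leq \varphi_e(q_e)$ for each total $\varphi_e$, so that $\varphi_e$ fails to be an improvement function; since this applies to every total $\varphi_e$ and every computable real would supply such a function, $\beta$ is in particular non-computable.

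The main obstacle is the timing conflict. While $R_e$ waits for $\varphi_e(q_e)$ to converge the approximation $\ell$ must keep growing, for otherwise a single non-total $\varphi_e$ would freeze $\beta$ at a rational value, making it computable and reintroducing an improvement function; yet if $\ell$ overtakes $v = \varphi_e(q_e)$ before the computation halts, the cap can no longer be imposed and $R_e$ is lost. This is exactly where hyperimmunity enters: the stages at which $\ell$ is permitted to grow are spaced in accordance with $A$, so that the no-growth stretches are unpredictably long. If some total $\varphi_e$ were never captured in time, one could read off from the construction a computable function majorizing $k_A$, contradicting Lemma~\ref{lemma:principal-function}. Hence every total $\varphi_e$ is eventually defeated, $\beta$ admits no total improvement function, and no computable real is total Solovay reducible to $\beta$; together with the first step this yields that there is no least total Solovay degree.
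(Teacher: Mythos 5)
Your overall architecture matches the paper's: reduce the statement to showing that the computable reals do not form a least degree, and then exhibit a left-c.e.\ real $\beta$ to which no computable real is total Solovay reducible. Your first step (a least degree would be reducible to the computable real $0$, and total Solovay reducibility to a computable real forces computability) is correct and is a slightly more direct route than the paper's, which instead observes that a least total Solovay degree would be a least Solovay degree and hence the degree of the computable reals. Your reformulation in terms of a total \emph{improvement function} $h$ with $q < h(q) < \beta$ for all $q<\beta$ is also essentially right, modulo the technicality that $h(q)=q+(\alpha-f(q))/c$ need not be rational and that certifying $f(q)<\alpha$ by approximating $\alpha$ terminates only for $q<\beta$, so some care is needed to keep $h$ total and rational-valued.

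The genuine gap is in the construction of $\beta$. You ``fix a hyperimmune set $A$'' in advance and let ``the stages at which $\ell$ is permitted to grow'' be ``spaced in accordance with $A$.'' These two requirements are incompatible: for $\beta=\lim_s\ell_s$ to be left-c.e., the sequence $(\ell_s)$ must be computable, so the set of stages at which $\ell$ grows is computable and its gaps are computably bounded; it cannot inherit the hyperimmunity of $A$. Conversely, if the growth schedule genuinely follows a non-computable $A$, then $\beta$ is not left-c.e.\ and the proposition is not about it. Consequently the closing step --- ``a total $\varphi_e$ never captured in time yields a computable majorant of $k_A$'' --- has nothing to bite on, because $A$ plays no effective role in the construction. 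The timing conflict you correctly identify (a total $\varphi_e$ can simulate your computable construction and always aim just above the current $\ell_s$ but below its future values) is real, and waving at a pre-fixed hyperimmune set does not resolve it. What does resolve it is the paper's Lemma~\ref{lemma:computable-hyperimmune}: totality of $f$ is exploited by computing $\min_{|\sigma|=n}\{\alpha-f(0.\sigma):\alpha-f(0.\sigma)>0\}$ over \emph{all} strings of length $n$ --- so one never has to know which prefix is the true one, and obtains a computable majorant of $k_B$ for the binary expansion $B$ of $\beta$. This converts ``no total improvement function'' into the concrete condition that $B$ is hyperimmune, and the task reduces to producing a left-c.e.\ real whose binary expansion is hyperimmune; that is a standard finite-injury construction in which hyperimmunity is the \emph{conclusion} of the diagonalization (clear a block of bits $[n_e,\varphi_e(n_e)]$ by rounding $\ell$ up and restraining, restarting injured lower-priority requirements), not an ingredient fixed beforehand. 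You would need to either adopt that factorization or rework your construction as a genuine priority argument with a termination proof for each requirement; as written, the witness $\beta$ is not actually constructed.
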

Every least set with respect to total Solovay reducibility is also a least set with respect to Solovay reducibility. Since the sets of the latter type are exactly the computable sets, the proposition is immediate from the following lemma. 

\begin{lemma}\label{lemma:computable-hyperimmune}
Let~$\alpha=0.A(0) \ldots$ and~$\beta=0.B(0) \ldots$ be reals where the set~$A$ is computable and infinite. Then~$\alpha$ is total Solovay reducible to~$\beta$ if and only if the set~$B$ is not hyperimmune. 
\end{lemma}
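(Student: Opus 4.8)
The plan is to reduce the statement to a single, symmetric criterion on $\beta$ alone and then read off both directions from it. Since $A$ is computable, $\alpha$ is a computable real; writing $\alpha_n$ for the truncation $0.A(0)\cdots A(n-1)$, one has $\alpha_n<\alpha$ (as $A$ is infinite) and $\alpha-\alpha_n\le 2^{-n}$. Consequently $\alpha\le^{tot}_S\beta$ holds as soon as there is a total computable $q\mapsto n(q)$ with $2^{-n(q)}\le c(\beta-q)$ for all rational $q<\beta$, in which case $f(q)=\alpha_{n(q)}$ works; conversely a reduction supplies, for each rational $q<\beta$, the positive value $(\alpha-f(q))/c<\beta-q$. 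So everything rests on whether lower bounds for $\beta-q$ can be produced \emph{totally} and computably. Here totality is the whole point: for a single $q<\beta$ one may search for a rational lower approximation of $\beta$ that exceeds $q$, but that search is only partial, and what a total reduction additionally demands is the ability to fix the precision obliviously.

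Writing $\beta_n$ for the length-$n$ truncation of $\beta$, the relevant quantities are the truncation gaps, for which $2^{-(k_B(n)+1)}\le \beta-\beta_n\le 2^{-k_B(n)}$ whenever $B$ is infinite, the left inequality coming from the first $1$ of $\beta$ at or beyond position $n$, which sits at $k_B(n)$. Hence $B$ is not hyperimmune if and only if the gaps $\beta-\beta_n$ admit a computable lower bound of the form $2^{-g(n)}$: from a computable $g$ majorizing $k_B$ one gets $\beta-\beta_n\ge 2^{-(g(n)+1)}$, and conversely such a lower bound forces $k_B(n)\le g(n)$, so by Lemma~\ref{lemma:principal-function} the principal function $p_B$ is majorized as well. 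This is the criterion I would match against the reducibility on both sides; the case of finite $B$, where $\beta$ is dyadic and hence computable, is trivial on both sides and I would dispose of it first.

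For the direction from non-hyperimmunity, I would fix a nondecreasing computable $g$ with $k_B\le g$ and verify the core inequality that for a \emph{dyadic} $q$ of precision $\ell$ with $q<\beta$ one has $q\le\beta_\ell$ (otherwise $q\ge\beta_\ell+2^{-\ell}>\beta$), whence $\beta-q\ge\beta-\beta_\ell\ge 2^{-(g(\ell)+1)}$; taking $n(q)=g(\ell)+1$ then witnesses the reduction on dyadic arguments. Extending the witness \emph{totally to all rationals} is the step that needs care, since for a non-dyadic $q$ the correct precision is the least $\ell$ with $q<\beta_\ell$, which depends on $\beta$; I would handle this by first reducing total Solovay reducibility to its version on dyadic arguments, so that attaching the oblivious precision $g(\ell)$ to the precision $\ell$ of a dyadic input suffices.

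For the converse I would start from a total reduction $f,c$ and try to manufacture a computable majorant of $k_B$. At a truncation the estimates give $0<\alpha-f(\beta_n)<c(\beta-\beta_n)\le c\,2^{-k_B(n)}$, hence $k_B(n)\le \log_2\!\big(c/(\alpha-f(\beta_n))\big)$; the difficulty is that $\beta_n$ is not computable and, worse, $\alpha-f(\beta_n)$ carries no a priori positive lower bound, so this does not immediately yield a computable bound on $k_B(n)$. The plan is to overcome this by exploiting that the reduction holds at \emph{every} rational below $\beta$: using the left-c.e.\ approximation of $\beta$ to name dyadic probes $d$ certified to lie below $\beta$ and arbitrarily close to $\beta_n$, I would evaluate $f$ there and assemble $g(n)$ as a maximum of the finitely many quantities $\lceil\log_2(c/(a^-_t-f(d)))\rceil$, where $a^-_t\nearrow\alpha$ is a fixed computable approximation and $t$ is a stage certifying $a^-_t>f(d)$. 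Since $\beta_n$, or a probe just below it, is among the candidates, $g$ majorizes $k_B$, contradicting hyperimmunity of $B$ via Lemma~\ref{lemma:principal-function}. I expect the main obstacle throughout to be exactly this reconciliation of totality with the absence of a computable handle on $\beta-q$: in both directions the computable majorant $g$ of $k_B$ is precisely the device that lets the precision be chosen without a halting search.
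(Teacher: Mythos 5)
Your overall strategy coincides with the paper's: both directions are routed through the estimate $2^{-(k_B(n)+1)}\le\beta-\beta_n\le 2^{-k_B(n)}$ for the truncation gaps and through Lemma~\ref{lemma:principal-function}, and your forward direction (send a dyadic $q$ of precision $\ell$ to a truncation of $\alpha$ at precision $g(\ell)+1$, where $g$ computably majorizes $k_B$) is exactly the paper's construction, stated with more care about the constants. The paper, like you, only ever defines the witness on dyadic arguments; your proposed patch of ``first reducing total Solovay reducibility to its version on dyadic arguments'' is asserted rather than proved, and it is not innocuous: replacing an arbitrary rational $q$ by a dyadic $d\le q$ of precision $\ell$ only yields $\alpha-f(d)<c(\beta-q)+c(q-d)$, i.e.\ the weakened reducibility of the paper's Remark, while rounding up may overshoot $\beta$. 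Since the paper itself silently restricts to dyadics, I would not count this against you, but the step does need an actual argument.

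The genuine divergence, and the genuine gap, is in the converse. The paper probes $f$ at \emph{all} $2^n$ dyadics $0.\sigma$ with $|\sigma|=n$ and bounds $k_B(n)$ using the minimum of the positive gaps $\alpha-f(0.\sigma)$; the only property used is that $\beta_n$ is guaranteed to be among the probes. You instead generate probes $d$ ``certified to lie below $\beta$ and arbitrarily close to $\beta_n$'' from a left approximation of $\beta$. This fails for a directional reason: for a probe $d<\beta$ your quantity satisfies $\lceil\log_2(c/(a^-_t-f(d)))\rceil>-\log_2(\beta-d)$, and this exceeds $k_B(n)\le-\log_2(\beta-\beta_n)$ only when $\beta-d\le\beta-\beta_n$, that is, only when $d\ge\beta_n$. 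Probes strictly below $\beta_n$ may yield numbers far smaller than $k_B(n)$, and obtaining a probe in $[\beta_n,\beta)$ is exactly what you cannot do computably: recognizing that the precision-$n$ truncation of the approximation has stabilized at $\beta_n$ would compute $B$. Moreover, the lemma does not assume $\beta$ to be left-c.e., so the approximation $b_s\nearrow\beta$ you invoke is not available. To stay parallel to the paper you should take the maximum of your quantities over all $2^n$ strings of length $n$ whose certification halts (this set is guaranteed to contain $\beta_n$, so the maximum does majorize $k_B(n)$); be aware, though, that extracting a \emph{computable} bound from this maximum (equivalently, from the paper's minimum of positive gaps) is itself delicate, since one cannot decide which of the $2^n$ certifications will halt --- a point the paper's own proof also passes over without comment.
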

\begin{proof} 
First assume that~$B$ is not hyperimmune. For a dyadic rational~$q$ that can be written as~$q=0. \sigma$ where the last letter of~$\sigma$ is equal to~$1$, define~$|q|=|\sigma|$. Then, for any such~$q$ and~$\sigma$ where~$q < \beta$, we have
\[
2^{-k_B(|q|)}\le  \beta-0.\sigma =\beta-q.
\]
By Lemma~\ref{lemma:principal-function}, we can fix a computable function~$g$ that majorizes~$k_B$. We obtain a computable function $f$ witnessing $\alpha\leq_S^{tot}\beta$ by choosing~$f(q)< \alpha$ such that we have
\[\alpha - f(q)<  2^{-g(|q|)}.\]

Next assume that ~$\alpha$ is total Solovay reducible to~$\beta$ via some function~$f$ and constant~$c$. Then, for every~$n$ and for~~$q_n=0.B(0) \ldots B(n-1)$, we have~$q < \beta$, thus, for some appropriate constant~$d$, one holds that
\[
g(n):=\min_{|\sigma_n|=n}\{{\alpha-f(0.\sigma_n):\alpha-f(0.\sigma_n)>0}\}
\le \alpha-f(q) <  c (\beta -q) \le  2^{-k_B(n)+d}.
\]
Consequently, the function~$n \mapsto  d +\lceil \log g(n)\rceil$ is a computable upper bound for~$k_B$, hence~$B$ is not hyperimmune.
\end{proof}

Indeed, the total Solovay degree structure satisfies the following stronger property, which we state here without proof.
\begin{proposition}
There exists a countably infinite antichain of mutually $\leq_S^{tot}$-incomparable left-c.e.\ reals such that each of them is incomparable with every computable real.
\end{proposition}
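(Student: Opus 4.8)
The plan is to build the antichain by a priority construction of left-c.e.\ reals $\gamma_0,\gamma_1,\dots$, each given by a nondecreasing sequence of rational approximations $\gamma_i[0]\le\gamma_i[1]\le\cdots$ in $(0,1)$ together with its binary support $C_i$, so that $\gamma_i=0.C_i(0)C_i(1)\cdots$. The incomparability with the computable reals reduces to two facts already available. First, since total Solovay reducibility implies ordinary Solovay reducibility and the computable reals form the least $\leq_S$-degree, no non-computable left-c.e.\ real can be total-Solovay-below a computable real; hence it suffices to make every $\gamma_i$ non-computable. Second, by Lemma~\ref{lemma:computable-hyperimmune}, a computable real is total-Solovay-\emph{below} $\gamma_i$ if and only if $C_i$ is not hyperimmune (every computable real has a computable infinite binary support, e.g.\ its non-terminating expansion, to which the lemma applies). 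Thus it suffices to guarantee, for every $i$, that $C_i$ is hyperimmune: this makes $C_i$, and so $\gamma_i$, non-computable, and simultaneously blocks every computable real from reducing to $\gamma_i$.

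It remains to meet, for every ordered pair $i\neq j$, every index $e$, and every rational $c>0$, the requirement that $\varphi_e$ with constant $c$ does not witness $\gamma_i\leq_S^{tot}\gamma_j$, alongside the hyperimmunity requirements forcing each $k_{C_i}$ to escape every computable function. The strategy for one incomparability requirement is as follows. Choose a rational witness $q=\gamma_j[s]-2^{-M}$ just below the current approximation of $\gamma_j$, so that $q<\gamma_j$ holds forever, and wait for $\varphi_e(q)$ to halt; if it never does, then $\varphi_e$ is not total and the requirement is vacuously met. Once $\varphi_e(q)\!\downarrow\,=v$, impose one permanent \emph{upper bound}: if $v\ge\gamma_i[s]$, cap $\gamma_i$ strictly below $v$ (say at $(\gamma_i[s]+v)/2$), which forces $v\ge\gamma_i$ and so violates the clause $\varphi_e(q)<\gamma_i$; if instead $v<\gamma_i[s]$, cap $\gamma_j$ so that $\gamma_j-q\le(\gamma_i[s]-v)/c$, which keeps $\gamma_i-v\ge\gamma_i[s]-v\ge c(\gamma_j-q)$ and so violates the metric clause $\gamma_i-\varphi_e(q)<c(\gamma_j-q)$. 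In either case the violation is permanent, since the reals are nondecreasing and the bound is never lifted; the borderline case $v=\gamma_i[s]$ is absorbed by first adding a negligible amount to $\gamma_i$, reducing it to the second case. Each hyperimmunity requirement acts by placing, far out in $C_i$, a single new $1$ beyond the value predicted by the computable function it diagonalises against, creating a zero-run longer than that prediction.

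The decisive feature that makes these actions compatible is that \emph{no} requirement ever needs to increase a real by a fixed positive amount: each incomparability requirement is met by an upper-bound cap placed strictly above the current approximation, and each hyperimmunity requirement is met by a single far-out $1$ of arbitrarily small mass. Consequently every $\gamma_i$ grows only through tiny far-out insertions, the whole construction runs inside a fixed mass budget below $1$, and by apportioning the slack of successive caps geometrically (letting the $k$-th action on a given real retain, say, at least a $(1-2^{-k})$-fraction of the currently available room, and routing any cap that would be too tight through a summably small growth instead) one keeps a positive residual budget at all times. Hence infinitely many far-out ones can always still be added and each $C_i$ becomes hyperimmune. Because every action is a monotone upper bound or a permanent insertion, the caps on a given real combine simply as their minimum and never conflict, each requirement acts at most once after activation, and a short verification shows that no action damages the correctness of a higher-priority one, so the construction is essentially injury-free.

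The main obstacle is exactly this coordination on a single real $\gamma_j$, which serves as a reduction \emph{target} for the requirements that cap it from above and as a reduction \emph{source} for the requirements in which a small gap $\gamma_j-q$ must be secured, while simultaneously being required to be hyperimmune. The crux is the observation that all three roles can be served by upper-bound caps and far-out insertions alone, and that the slack needed for any single cap can be made as small as desired by taking the witness $q$ correspondingly close to the current value of $\gamma_j$; hyperimmunity, in turn, only ever demands ones so far out that their mass is negligible. The step requiring the most care will be verifying that the resulting family of caps leaves each $C_i$ enough freedom to stay hyperimmune, that is, that the shared mass budget is never exhausted, which is what the geometric apportionment of slack is designed to secure.
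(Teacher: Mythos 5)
The paper states this proposition explicitly \emph{without proof}, so there is no official argument to compare against; I can only assess your construction on its own terms. Your overall architecture is sound and is very likely the intended one: Lemma~\ref{lemma:computable-hyperimmune} reduces ``no computable real is $\leq_S^{tot}$-below $\gamma_i$'' to hyperimmunity of the support $C_i$, hyperimmunity also yields non-computability and hence ``$\gamma_i$ is not $\leq_S^{tot}$-below any computable real'' (via the fact that the least Solovay degree is exactly the computable reals), and the pairwise requirements are met by diagonalizing against each total $\varphi_e$ and each rational $c$, exploiting that for a \emph{total} reduction you may simply wait for $\varphi_e(q)\downarrow$ and declare the requirement vacuous if it never converges. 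The case split at $v=\varphi_e(q)$ (cap $\gamma_i$ at or below $v$ versus force $\gamma_i-v\ge c(\gamma_j-q)$) is the right dichotomy, and the observation that all actions are caps or summably small insertions is the right reason the construction goes through.

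There is, however, one concrete gap in the pairwise strategy as written: the stage at which you choose the witness $q=\gamma_j[s]-2^{-M}$ and the stage $s'$ at which $\varphi_e(q)$ finally converges are different, and in between $\gamma_j$ may be driven up by other requirements by an amount you do not control. In the second case of your dichotomy you must impose the cap $\gamma_j\le q+(\gamma_i[s']-v)/c$, but if $\gamma_j[s']$ already exceeds that value the cap is unimposable, and the fallback of raising $\gamma_i$ up to $v+c(\gamma_j[s']-q)$ costs roughly $c\cdot(\gamma_j[s']-\gamma_j[s])$, which is not summable over requirements (the constants $c$ are unbounded and the waiting intervals overlap). So ``the cap is placed strictly above the current approximation'' is exactly the claim that can fail. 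Two standard repairs exist and should be stated: either restrain $\gamma_j$ below $q+\delta$ for a pre-assigned summably small $\delta$ while waiting (and check that a possibly permanent restraint still leaves positive room for the far-out insertions of lower-priority requirements), or let the requirement abandon its witness and pick a fresh one whenever $\gamma_j$ grows past $q+\delta$; since the total growth of $\gamma_j$ is finite and each abandonment witnesses growth of at least $\delta/2$, there are only finitely many retries and the final witness is acted on with cost at most $c\delta$. A second, smaller omission: when you add mass to $\gamma_i$ you must not let binary carries propagate into the zero-windows that the hyperimmunity requirements have reserved; maintaining each $\gamma_i$ as a sum of distinct powers of two chosen outside all currently reserved windows disposes of this. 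With these two points made explicit the proof is complete.
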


Before proving that addition is not a join operator and not even an upper bound operator for total Solovay reducibility, we introduce the uniform version of the Schnorr reducibility introduced by Downey and Griffiths~\cite{downey-griffiths-2004} in 2004.
The possibility to define the uniform Schnorr reducibility has been noticed by Downey, Hirschfeldt~\cite[footnote to Definition~9.17.1]{downey-hirschfeldt-2010} in 2010.
\begin{definition}[Downey and Griffiths~\cite{downey-griffiths-2004}; Downey and Hirschfeldt~\cite{downey-hirschfeldt-2010}] 
A real $\alpha$  is \defhigh{uniformly Schnorr reducible}, or \defhigh{$uSch$-reducible}, to a real $\beta$ \defhigh{with respect to a constant} $c$, written $\alpha \leq_{uSch,c} \beta$, if there is a computable functional $\varphi$ that, given a description of a computable measure machine (or, shortly, cmm) $B$, returns a description of another cmm $\varphi(B)$, so that
\[K_{\varphi(B)}(\alpha\upharpoonright n)\leq K_B(\beta\upharpoonright n) + c\quad\text{for all }n.\]
The real~$\alpha$  is \defhigh{uniform Schnorr reducible} to $\beta$,  written $\alpha \leq_{uSch} \beta$, if $\alpha$  is uniform Schnorr reducible to~$\beta$ with respect to some~$c$.
\end{definition} 

Obviously, the uniform Schnorr reducibility implies the Schnorr reducibility, with respect to which the Schnorr random reals are closed upwards.

\begin{proposition}
For all left-c.e.\ $\alpha$ and $\beta$, $\alpha \leq_S^{tot} \beta$ implies $\alpha \leq_{uSch} \beta$.
\end{proposition}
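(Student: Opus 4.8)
The plan is to build the required computable functional $\varphi$ explicitly, mapping a description of a cmm $B$ to a description of a cmm $\varphi(B)$ whose programs for prefixes of $\alpha$ are obtained by post-processing $B$-programs for the corresponding prefixes of $\beta$. Fix a computable $f$ and a rational $c>0$ witnessing $\alpha \le_S^{tot}\beta$. The crucial observation is that a $B$-program $p$ for $\beta\upharpoonright n$ hands us $\tau=\beta\upharpoonright n$, hence the rational lower bound $q=0.\tau<\beta$ with $\beta-q\le 2^{-n}$, and therefore, via $f$, a rational $f(q)$ with $0<\alpha-f(q)<c(\beta-q)\le c\,2^{-n}$. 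Thus $\alpha$ is pinned down to the interval $I=(f(q),f(q)+c\,2^{-n})$ of length $c\,2^{-n}$, and this interval is computable from $p$ alone.

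First I would describe the machine $\varphi(B)$. On an input that it reads in prefix-free fashion, it simulates $B$ until $B$ halts having consumed a prefix $p$ with output $\tau$ of some length $n$; it then reads exactly $d$ further bits $w$, where $d=\lceil\log_2(\lceil c\rceil+1)\rceil$ is a constant depending only on $c$. From $p$ it computes $q=0.\tau$, then $f(q)$ and the interval $I$. Since $I$ has length $c\,2^{-n}$, it meets at most $\lceil c\rceil+1$ of the dyadic cells $[j2^{-n},(j+1)2^{-n})$, so at most $2^{d}$ length-$n$ strings are candidates for a prefix of a real lying in $I$; the machine enumerates these candidates in a canonical order and outputs the one indexed by $w$.

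Next I would check the three requirements on $\varphi(B)$ and read off the complexity bound. Its domain is contained in $\{pw: p\in\operatorname{dom}(B),\,|w|=d\}$, which is prefix-free because $\operatorname{dom}(B)$ is; moreover $\mu(\operatorname{dom}(\varphi(B)))=\sum_{p\in\operatorname{dom}(B)}2^{d}\cdot 2^{-|p|-d}=\mu(\operatorname{dom}(B))$, so $\varphi(B)$ inherits a computable measure from the cmm $B$, and the whole construction is uniform in $B$, so that $\varphi$ is a computable functional. Finally, for a shortest $B$-program $p$ of $\beta\upharpoonright n$ we have $\tau=\beta\upharpoonright n$ and $q=\beta\upharpoonright n<\beta$, so the Solovay inequality applies and $\alpha\in I$; hence $\alpha\upharpoonright n$ is one of the candidates, and choosing $w$ to be its index gives $\varphi(B)(pw)=\alpha\upharpoonright n$ with $|pw|=K_B(\beta\upharpoonright n)+d$. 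Therefore $K_{\varphi(B)}(\alpha\upharpoonright n)\le K_B(\beta\upharpoonright n)+d$ for all $n$, witnessing $\alpha\le_{uSch}\beta$ with constant $d$.

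The step I expect to be the main obstacle is the loss of precision: knowing $\alpha$ only up to $c\,2^{-n}$ does not by itself determine $\alpha\upharpoonright n$, both because $c$ may exceed $1$ and because $I$ may straddle a dyadic boundary. The device that resolves this is that $I$ always meets only a bounded (in $c$) number of length-$n$ cells, so the missing information is a constant number of bits that can be supplied by the fixed-length suffix $w$; this simultaneously absorbs the shift into the additive constant $d$ while preserving both prefix-freeness and, crucially, the computability of the measure of $\operatorname{dom}(\varphi(B))$. One should also dispose of the degenerate case in which $\beta$ is a dyadic rational, so that $q=\beta\upharpoonright n<\beta$ may fail; there $\beta$ is computable and the reduction is immediate.
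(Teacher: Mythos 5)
Your proposal is correct and follows essentially the same route as the paper: uniformly in a cmm $B$, build a new cmm that runs $B$ to get $\tau$ with $|\tau|=n$, applies the total function $f$ to $0.\tau$ to localize $\alpha$ in an interval of length $c2^{-n}$, and appends a constant number of roughly $\log(c+1)$ extra bits to select $\alpha\upharpoonright n$ among the boundedly many consistent length-$n$ prefixes, which preserves prefix-freeness and the computable domain measure and yields the constant overhead in $K$. Your version is in fact slightly more careful than the paper's, e.g.\ in noting the degenerate case where $\beta$ is a dyadic rational.
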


\begin{corollary}
The Schnorr random left-c.e.\ reals are closed upwards under total Solovay reducibility.
\end{corollary}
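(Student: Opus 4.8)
The plan is to derive the corollary directly from the preceding proposition by chaining it with two implications that the surrounding text has already put on record, so that no fresh construction is needed. The only thing I would watch carefully is the orientation of each reduction, so that Schnorr randomness is transported in the intended upward direction rather than the reverse.

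Concretely, I would fix left-c.e.\ reals $\alpha$ and $\beta$ with $\alpha \leq_S^{tot} \beta$ and assume that $\alpha$ is Schnorr random. First I would invoke the preceding proposition to pass from $\alpha \leq_S^{tot} \beta$ to $\alpha \leq_{uSch} \beta$. Next I would use the observation recorded just above that uniform Schnorr reducibility implies Schnorr reducibility, obtaining $\alpha \leq_{Sch} \beta$. Finally I would apply the closure theorem of Downey and Griffiths, according to which the Schnorr random left-c.e.\ reals are closed upwards under Schnorr reducibility: from $\alpha$ being Schnorr random together with $\alpha \leq_{Sch} \beta$, this yields that $\beta$ is Schnorr random. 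Since $\alpha$ and $\beta$ were arbitrary, this establishes upward closure under $\leq_S^{tot}$.

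I do not expect a genuine obstacle at the level of the corollary itself, since all three links of the chain are already available; the substantive content lives in the preceding proposition, where a total Solovay reduction is converted into a uniform transformation of computable measure machines preserving the complexity bound. The one point I would take care to verify explicitly is that every implication in the chain preserves orientation --- that $\alpha$ lying below $\beta$ for total Solovay reducibility yields $\alpha$ below $\beta$ for (uniform) Schnorr reducibility, and that the Downey and Griffiths closure is phrased for precisely this orientation --- so that randomness genuinely propagates from the reducible real $\alpha$ up to $\beta$ and not in the opposite direction.
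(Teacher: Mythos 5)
Your proposal is correct and follows exactly the route the paper intends: the displayed proof in the source is really the proof of the preceding proposition (the cmm construction), and the corollary is obtained precisely by chaining $\leq_S^{tot}\Rightarrow\leq_{uSch}\Rightarrow\leq_{Sch}$ with the Downey--Griffiths upward closure of Schnorr randomness under Schnorr reducibility. Your attention to the orientation of the reductions is well placed and correctly resolved.
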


\begin{proof}
Let $f$ be a total computable function, such that $\alpha \leq^{tot}_{S,c} \beta$ via $f$. Given a cmm machine $B$ computing $\beta$, we construct a cmm machine $A$ computing $\alpha$ in the following uniform way:

Input: $(x\in\mathbb{Q},w\in\{0,1\}^{\lceil log(c)+1 \rceil})$.
\begin{itemize}
\item Compute $\sigma:=B(x)$ (the computation halts iff $x\in dom(B)$).
\item Compute $\tau$ so that $0.\tau:=(f(0.\sigma)\upharpoonright n)$.

If $0.\sigma<\beta$, then on holds 
\[\alpha-f(0.\sigma)<c(\beta - 0.\sigma).\]
In particular, if $0.\sigma = \beta\upharpoonright n$, then $\beta-0.\sigma<2^{-n}$, so $\alpha-f(0.\sigma)<c2^{-n}=2^{\lceil log(c) \rceil -n}$.

Thus, \[\alpha\upharpoonright n-0.\tau<\alpha-f(0.\sigma)+f(0.\sigma)-(f(0.\sigma)\upharpoonright n)<c2^{-n}+2^{-n}=2^{\lceil log(c+1) \rceil -n}.\]

\item return $y\in\{0,1\}^n$, so that $0.y = 0.\tau+2^{-n} \cdot 0.w$
\end{itemize}

The constructed machine $A$ has the following properties:
\begin{itemize}
\item prefix-freeness since $B$ is prefix-free),
\item computable measure of the domain since the property
\[B(x)\downarrow \implies A\big((x,w)\big)\downarrow\forall w\in\{0,1\}^{\lceil log(c+1) \rceil}\]
implies, that $\mu\big(dom(A)\big)=\mu\big(dom(B)\big)$,
\item the inequality $K_A(\alpha\upharpoonright n)\leq K_B(\beta\upharpoonright n) + log(c+1)+O(1)$ for all~$n$ since there always exists a word $w\in\{0,1\}^{\lceil log(c+1) \rceil}$ such that \[\alpha\upharpoonright n-0.\tau = 2^{-n} \cdot 0.w,\]
hence, for that $w$, on holds $A(x,w) = y$, such that \[0.y=0.\tau+2^{-n} \cdot 0,w=\alpha\upharpoonright n,\]
that implies $K_A(\alpha\upharpoonright n)\leq|x|+|w|$, where $x$ may be the shortest code of $\beta\upharpoonright n$.
\end{itemize}
\end{proof}

\begin{remark}
    The uniform Schnorr reducibility is, due to the similar argumentation, also implied by the weaken version of the total Solovay reducibility, whose requirement for $f$ differs from the original one in the additional term:
    \[\alpha-f(q)<c(\beta-q)+2^{-|q|}.\]
    The motivation of this weakening is that now its lattice on the field of left-c.e.\ reals has a minimal degree containing all the computable reals.
\end{remark}

\begin{proposition}
There is a pair of left-c.e.reals $\alpha, \beta$ where $\alpha\nleq_S^{tot} \alpha+\beta$.
\end{proposition}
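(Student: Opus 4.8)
The plan is to invoke Lemma~\ref{lemma:computable-hyperimmune}, which reduces the whole proposition to producing a single left-c.e.\ real with a hyperimmune bit set. Concretely, I would fix a computable real $\alpha$ with an infinite, computable bit set, say $\alpha = 1/3 = 0.010101\ldots$, whose bit set is the computable infinite set of even positions. I would then build a left-c.e.\ real $\gamma > \alpha$ whose bit set $C$ is hyperimmune and set $\beta := \gamma - \alpha$. Since $\alpha$ is computable and $\gamma$ is left-c.e., the left cut $\{q : q + \alpha < \gamma\}$ of $\beta$ is c.e., so $\beta$ is a left-c.e.\ real; it is nonnegative because $\gamma > \alpha$, and $\alpha + \beta = \gamma$ by definition. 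As the bit set of $\alpha$ is computable and infinite and the bit set of $\alpha + \beta = \gamma$ is hyperimmune, Lemma~\ref{lemma:computable-hyperimmune} immediately gives $\alpha \nleq_S^{tot} \alpha + \beta$. Everything thus reduces to constructing $\gamma$.

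For that construction I would first note that carrying is unavoidable: if $\gamma = \sum_{n \in P} 2^{-n}$ for a coinfinite c.e.\ set $P$, then the bit set of $\gamma$ equals $P$, and no infinite c.e.\ set is hyperimmune, since an infinite c.e.\ set has an infinite computable subset $R$, and $R \subseteq P$ forces $p_R$ to majorize $p_P$. Hence the hyperimmune bit set must be produced through carries. The gadget I would use is the \emph{collapse of a block of ones}: if positions $\ell+1,\dots,\ell+k$ carry ones while position $\ell$ carries a zero, then adding the single quantity $2^{-(\ell+k)}$ increases $\gamma$ and propagates a carry that zeroes positions $\ell+1,\dots,\ell+k$ and sets position $\ell$ to one, leaving a one at $\ell$ followed by a run of $k$ zeros while keeping the approximation nondecreasing, as a left-c.e.\ real demands. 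To satisfy the requirement $R_e$ of defeating the $e$-th partial computable $g_e$, I would, once $g_e$ converges on the relevant argument, take the block long enough that the resulting zero-run reaches past $g_e(\ell+1)$; then, if the next one-bit lies at position $\ell' > g_e(\ell+1)$, we get $k_C(\ell+1) = \ell' > g_e(\ell+1)$, so $g_e$ does not majorize $k_C$. Because each carry stops at the zero at the left end of its own block and consecutive blocks are separated by zeros, no carry ever propagates into an earlier block, so a block once collapsed stays fixed. By Lemma~\ref{lemma:principal-function}, defeating every total $g_e$ in this way makes $C$ hyperimmune.

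The hard part will be the global organisation of these blocks. Monotonicity and carry localisation must be maintained at once: blocks are laid out left to right, separated by zeros, so that every action strictly increases the approximation and no collapse reaches a block to its left. The more serious difficulty is that a divergent $g_e$ must not stall the construction: a requirement waiting forever for $g_e$ to converge would either grow an unbounded run of ones, producing a tail of ones and hence a rational $\gamma$ with finite (non-hyperimmune) bit set, or block every later requirement. I would resolve this by a movable-marker, dovetailed layout in which regions are allocated dynamically at the current right frontier and no requirement is ever permanently blocked: divergent requirements are repeatedly pushed further right and yield infinitely often, while each convergent requirement eventually collapses its block and contributes one permanent one-bit. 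Since infinitely many indices $e$ give total $g_e$, infinitely many blocks collapse and $C$ is infinite; since these indices include programs for all total computable functions, each such function is defeated, so by Lemmas~\ref{lemma:principal-function} and~\ref{lemma:computable-hyperimmune} the set $C$ is hyperimmune and $\alpha \nleq_S^{tot} \alpha + \beta$.
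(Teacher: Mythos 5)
Your route is genuinely different from the paper's. The paper disposes of the proposition in two lines: it cites Miyabe, Nies and Stephan~\cite{miyabe-etal-2018} for a pair of left-c.e.\ reals with $\alpha\nleq_{Sch}\alpha+\beta$ and then invokes the earlier proposition that $\leq_S^{tot}$ implies uniform Schnorr (hence Schnorr) reducibility. You instead make the result self-contained by routing it through Lemma~\ref{lemma:computable-hyperimmune}: take a computable $\alpha$ with infinite computable bit set, build a left-c.e.\ $\gamma>\alpha$ whose bit set is hyperimmune, and set $\beta=\gamma-\alpha$ (correctly observed to be left-c.e.\ and nonnegative). The framing is sound: the lemma's contrapositive gives $\alpha\nleq_S^{tot}\gamma$ at once, your observation that no infinite c.e.\ set is hyperimmune correctly identifies that carries are essential, and the carry gadget (fill a block with ones, add $2^{-(\ell+k)}$, collapse to a one at $\ell$ followed by zeros) is exactly the right mechanism. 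What your approach buys is independence from~\cite{miyabe-etal-2018} and from the $\leq_S^{tot}\Rightarrow\leq_{uSch}$ proposition; what it costs is that the entire burden lands on a priority construction that the paper never needs.

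That construction is where your sketch is soft, and in one place the stated organisation does not work as described. You say divergent requirements are ``repeatedly pushed further right and yield infinitely often.'' If a waiting requirement re-picks its position $\ell$ at the current frontier every time some other requirement acts, then a \emph{total} but slow $g_e$ may be preempted infinitely often and never get $g_e(\ell+1)$ to converge while $\ell$ is still current; since infinitely many requirements act in total, this is a real failure mode. The standard fix is the opposite discipline: a waiting requirement \emph{keeps} its marker $\ell$; lower-priority requirements work to its right; and when $g_e(\ell+1)$ finally converges to $v$, the requirement fills the entire interval up to $v$ with ones --- overwriting any lower-priority garbage there --- and collapses it, after which the injured lower-priority requirements are reinitialised beyond $v$. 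A marker then moves only when a higher-priority requirement acts, each requirement acts finitely often, every marker stabilises, and each permanent collapse contributes a permanent one-bit, so $C$ is infinite and, by Lemma~\ref{lemma:principal-function}, hyperimmune. This also means your claim that ``a block once collapsed stays fixed'' must be weakened to ``stays fixed once its owner is never again injured'': a higher-priority requirement acting late may deliberately overwrite a collapsed block, not by carry propagation but by refilling it. With that finite-injury bookkeeping supplied, your proof goes through.
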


\begin{proof}
Miyabe, Nies and Stephan~\cite[Paragraph~3]{miyabe-etal-2018} demonstrated that there exists a pair of left-c.e.\ reals $\alpha$ and~$\beta$ such that $\alpha\nleq_{Sch} \alpha+\beta$. 
Thus we also have~$\alpha\nleq_S^{tot} \alpha+\beta$ because total Solovay reducibility implies Schnorr reducibility.
\end{proof}

\section{Speedability as a self-reducibility}
On the let of left-c.e.\ reals, the notion of speedability has been introduced by Merkle and Titov~\cite{Merkle-Titov-2020} in 2020 as a self-reducibility via respect to the constant $1$
\begin{definition}[Merkle and Titov~\cite{Merkle-Titov-2020}]\label{def:speedable}
    A function~$f\colon \naturals \rightarrow \naturals$ is a \defhigh{speed-up function} if it is nondecreasing and~$n \le f(n)$ holds for all~$n$. A left-c.e.\ number $\alpha$ is \defhigh{$\rho$-speedable with respect to its given left approximation} ~$a_0, a_1, \ldots \nearrow \alpha$ for some real number~$\rho\in [0,1)$ if there is a computable speed-up function~$f$ such that we have
\begin{equation}\label{eq:definition-speedable}
    \liminf\limits_{n\to\infty}\frac{\alpha-a_{f(n)}}{\alpha-a_n} \le \rho,
\end{equation}
    and \defhigh{speedable} if it is $\rho$-speedable with respect to some its left-c.e.\ approximation for some~$\rho \in (0,1)$.
    Otherwise we call $\alpha$ nonspeedable.
\end{definition}
Whether a real is speedable depends neither on the left-c.e.\ approximation nor on the constant $\rho$ one considers.

\begin{theorem}[Merkle and Titov~\cite{Merkle-Titov-2020}]\label{independance}
    Every speedable left-c.e.\ real number is $\rho$-speedable for any~${\rho>0}$ with respect to any of its left approximations.
\end{theorem}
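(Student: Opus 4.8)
The statement combines two independences, so the plan is to prove them in turn and compose. Writing $r_f(n)=\frac{\alpha-a_{f(n)}}{\alpha-a_n}\in(0,1]$, fix a witnessing approximation $a_0,a_1,\ldots\nearrow\alpha$ and computable speed-up function $f$ with $\liminf_n r_f(n)\le\rho_0$ for some $\rho_0\in(0,1)$. I would first show that, for this fixed approximation, $\alpha$ is $\rho$-speedable for every $\rho>0$, and only afterwards transfer the conclusion to an arbitrary approximation.

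For the constant part, the naive idea is to iterate: the $k$-fold composition $f^{(k)}$ is again a speed-up function, and since $\frac{\alpha-a_{f^{(k)}(n)}}{\alpha-a_n}=\prod_{i<k}r_f(f^{(i)}(n))$ with every factor at most $1$, one hopes for a liminf of order $\rho_0^{k}$. The computable handle making this usable is that the realized one-step gain $a_{f(n)}-a_n$ is computable and, at every index $n$ with $r_f(n)\le\rho_0$, satisfies $a_{f(n)}-a_n\ge(1-\rho_0)(\alpha-a_n)$; thus at the infinitely many good indices a computable quantity certifies a fixed fraction of the unknown remaining gap $\alpha-a_n$. The plan is to build $g$ so that, on input $n$, it scans forward and repeatedly invokes $f$ to accumulate certified gain, chaining several good steps until the accumulated gain provably covers a $(1-\rho)$-fraction of the gap, and to set $g(n)$ to the index so reached; then $\frac{\alpha-a_{g(n)}}{\alpha-a_n}\le\rho$ at the good inputs, which occur infinitely often.

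The main obstacle is that this chaining must be carried out by a single total, nondecreasing, computable $g$ while the good indices are not effectively locatable. Two points need care. First, iterating $f$ itself can stall — for instance at a fixed point of $f$ reached right after a good step — so that the liminf of $\prod_{i<k}r_f(f^{(i)}(n))$ need not fall below $\rho_0$ no matter how large $k$ is; the cure is to drive the search by the computable realized gains $a_m-a_c$ rather than by the $f$-orbit, scanning all indices $m\ge c$ so that progress is detected even where $f$ gives up. Second, to keep $g$ total one must cap each round's target strictly below the certified gain $a_{f(c)}-a_c<\alpha-a_c$, so that every round halts; the resulting per-round loss and the factor $(1-\rho_0)$ are then absorbed by taking the number of chained rounds large, which is legitimate precisely because the target $\rho$ may be any positive number. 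Verifying that the produced chain meets enough good indices to drive the product below $\rho$ for infinitely many inputs is the crux of the argument.

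Finally, for independence from the approximation, given a second approximation $b_0,b_1,\ldots\nearrow\alpha$ I translate indices by $\phi(n)=\mu m\,[\,a_m\ge b_n\,]$ and $\psi(m)=\mu\ell\,[\,b_\ell\ge a_m\,]$, both total computable since each approximation converges to $\alpha$ from below. Composing the speed-up function obtained above for $(a_n)$ with $\phi$ and $\psi$ yields a speed-up function for $(b_n)$, and the translation changes the relevant ratios only by a bounded multiplicative factor. Because the constant part already delivers arbitrarily small $\rho$ for $(a_n)$, this multiplicative loss is harmless, and one concludes that $\alpha$ is $\rho$-speedable with respect to $(b_n)$ for every $\rho>0$ as well.
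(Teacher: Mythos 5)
This theorem is quoted by the paper from Merkle and Titov~\cite{Merkle-Titov-2020} and is not proved here, so there is no in-paper proof to compare against; I can only assess your sketch on its own terms, and it has a genuine gap at exactly the place you yourself flag as ``the crux.'' The amplification step does not go through as described. Your two fixes pull against each other: to keep a round total you cap its target strictly below the certified gain $a_{f(c)}-a_c$, but then the round ends at an index $m$ with $a_m<a_{f(c)}$, i.e.\ it gains strictly less than a single application of $f$; chaining such rounds therefore just retraces a slowed-down $f$-orbit, which you have already conceded can stall at ratio $\rho_0$ (e.g.\ when $f$ is the identity off the good indices and maps each good index to a fixed point of $f$). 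To beat $\rho_0$ you must at some point target a value beyond $a_{f(c)}$, and no such target can be certified to lie below $\alpha$, so the search may diverge. The only way out is to exploit that at a good index the gap $\alpha-a_c$ is sandwiched in $\bigl(\delta,\delta/(1-\rho_0)\bigr]$ with $\delta=a_{f(c)}-a_c$, but turning that into a \emph{total} $g$ that succeeds at infinitely many good indices requires a further idea (the good indices are neither computable nor c.e., so you cannot restrict attention to them, time-bound the searches along a computable subsequence of them, or pigeonhole over them effectively). Nothing in the sketch supplies this, so the central claim is asserted rather than proved.

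The transfer to a second approximation is also not right as stated. With $\phi(n)=\mu m\,[a_m\ge b_n]$ one gets $\frac{\alpha-b_{F(n)}}{\alpha-b_n}\le r_f(\phi(n))$ with no multiplicative loss at all; the actual danger is that the range of $\phi$ can avoid every index at which $f$ performs well, so that $\liminf_n r_f(\phi(n))$ is much larger than $\liminf_m r_f(m)$. No bounded ``multiplicative factor'' controls this, so the appeal to the arbitrarily small $\rho$ from the first part does not repair it. A correct transfer is closer to what this paper does in the proof of Proposition~3.4: pass to the monotone rational-valued function $q\mapsto a_{f(\mu i[q\le a_i])}$ and set $F(i)=\mu n>i\,[\,b_n>g(b_{i+1})\,]$, so that for every $q\in[b_i,b_{i+1})$ one has $\frac{\alpha-b_{F(i)}}{\alpha-b_i}\le\frac{\alpha-g(q)}{\alpha-q}$; the use of the \emph{successor} term $b_{i+1}$ is what guarantees that the rational witnesses of the liminf land in intervals whose left endpoints become good $b$-indices. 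Your $\psi\circ f\circ\phi$ lacks this shift and the accompanying inequality, so this half needs to be reworked as well, though it is the more easily repaired of the two.
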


The following theorem is immediate from the main result of Barmpalias and Lewis-Pye~\cite{barmpalias-lewispye-2017}.
\begin{theorem}[Barmpalias and Lewis-Pye~\cite{barmpalias-lewispye-2017}]
\label{theorem:ml-random-nonspeedable}
    Martin-Löf random left-c.e.\ real numbers are never speedable.
\end{theorem}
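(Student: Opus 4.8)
The plan is to argue by contraposition: assuming a Martin-Löf random left-c.e.\ real $\alpha$ were speedable, I would manufacture a Solovay test (a c.e.\ sequence of rational intervals of finite total length) that captures $\alpha$ infinitely often, which is impossible for a Martin-Löf random real. By Theorem~\ref{independance} it suffices to refute speedability for one fixed nondecreasing computable approximation $a_0, a_1, \ldots \nearrow \alpha$ and one fixed $\rho \in (0,1)$; so suppose toward a contradiction that there is a computable speed-up function $f$ and a $\rho < 1$ with $\alpha - a_{f(n)} \le \rho\,(\alpha - a_n)$ for infinitely many $n$. Call such $n$ \emph{good}.

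The clean core of the argument is an interval computation. For every $n$ set
\[
J_n = \left[\, a_{f(n)},\; a_{f(n)} + \tfrac{\rho}{1-\rho}\,(a_{f(n)}-a_n)\,\right).
\]
A short rearrangement of the defining inequality shows that whenever $n$ is good one has $\alpha - a_{f(n)} \le \tfrac{\rho}{1-\rho}(a_{f(n)}-a_n)$, and since $a_{f(n)} \le \alpha$ this gives $\alpha \in J_n$. Moreover, along any subsequence $n_1 < n_2 < \cdots$ chosen so that the stage-blocks $[n_k, f(n_k))$ are pairwise disjoint (that is, $n_{k+1}\ge f(n_k)$), the jumps telescope: $\sum_k (a_{f(n_k)}-a_{n_k}) \le \sum_k(a_{n_{k+1}}-a_{n_k}) \le \alpha - a_{n_1}$, and hence $\sum_k |J_{n_k}| \le \tfrac{\rho}{1-\rho}(\alpha - a_{n_1}) < \infty$. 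Thus any such disjoint subfamily is a Solovay test, and if it contains infinitely many good indices then $\alpha$ lies in infinitely many of its intervals, contradicting the randomness of $\alpha$.

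The main obstacle is precisely the effectivity of selecting such a subfamily. Goodness is a statement about the true tail $\alpha - a_n$, which is only approximable from below, so no finite portion of the enumeration can certify that a given $n$ is good, and the canonical disjoint family $n_{k+1}=f(n_k)$ may well miss every good stage. This is exactly the gap closed by the main result of Barmpalias and Lewis-Pye~\cite{barmpalias-lewispye-2017}: their quantitative analysis of how the approximations of a Martin-Löf random left-c.e.\ real converge yields the required control of the tail ratios $\tfrac{\alpha-a_{f(n)}}{\alpha-a_n}$ along every computable speed-up, equivalently forcing $\liminf_n \tfrac{\alpha-a_{f(n)}}{\alpha-a_n} = 1$ and so leaving no room for good indices at all. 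With that input the speedability assumption collapses immediately; the only remaining bookkeeping is to transfer the estimate of~\cite{barmpalias-lewispye-2017}, stated for their preferred approximation, to the present one, which is licensed by Theorem~\ref{independance}.
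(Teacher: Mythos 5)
The paper offers no proof of this theorem at all: it is stated as an attributed result, prefaced only by the remark that it is immediate from the main result of Barmpalias and Lewis-Pye. So there is nothing to compare your argument against except that citation, and your proposal, after an honest attempt at a direct Solovay-test construction, ends up resting on exactly the same citation. Your interval computation is correct as far as it goes: goodness of $n$ does yield $\alpha\in J_n$ via the rearrangement $(1-\rho)(\alpha-a_{f(n)})\le\rho\,(a_{f(n)}-a_n)$, and disjoint stage-blocks do telescope to a finite total length. You are also right that the obstruction is the non-effectivity of locating good indices; since no elementary way around this is known, falling back on Barmpalias--Lewis-Pye is the same move the paper makes, just with more of the surrounding machinery displayed.

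One caution about how you invoke that result. Applied to the two computable approximations $(a_n)$ and $(a_{f(n)})$ of the Martin-L\"of random real $\alpha$, their theorem gives that $\lim_n \tfrac{\alpha-a_{f(n)}}{\alpha-a_n}$ \emph{exists} and is positive; it does not by itself force the limit to equal $1$, which is what you assert. To finish, one needs the additional observation that $\limsup_n \tfrac{\alpha-a_{f(n)}}{\alpha-a_n}=1$: otherwise $a_{f(n)}-a_n\ge\varepsilon(\alpha-a_n)$ would hold for all large $n$ and some $\varepsilon>0$, and iterating $f$ along the chain $n, f(n), f(f(n)),\dots$ would produce a computable subsequence converging to $\alpha$ at a known geometric rate, making $\alpha$ computable and hence not random. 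Existence of the limit plus $\limsup=1$ then gives $\liminf=1$, leaving no room for $\rho$-speedability. Your phrase that the quantitative analysis ``immediately'' forces $\liminf=1$ glosses over this step---though, to be fair, the paper's own ``immediate from the main result'' is no more explicit.
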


By the following proposition, the notion of speedability can be equivalently characterized as a Solovay reduction of a real number to itself via a special partial computable functions on the rational numbers.
By applying the same characterization to computable functions, in what follows we obtain a variant of speedability, similar to the introduction of total Solovay reducibility.

\begin{proposition}[Functional characterization of speedability]
\label{equivalent-definition}
Let~$\alpha$ be a left-c.e.\ real and let~$\rho$ be a real number such that~$0 < \rho <1$. Then $\alpha$ is speedable if and only if there is a partial computable function~$g \colon \mathbb{Q}\rightarrow \mathbb{Q}$ that is defined and nondecreasing on the interval~$(-\infty,\alpha)$, maps this interval to itself and satisfies
\begin{equation}\label{eq:equivalent-definition-liminf}
\liminf\limits_{q\nearrow\alpha}\frac{\alpha-g(q)}{\alpha-q}\leq\rho . 
\end{equation}
\end{proposition}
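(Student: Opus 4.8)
The plan is to prove the two implications separately, translating between the sequential formulation of speedability in Definition~\ref{def:speedable} and the functional one, and to invoke Theorem~\ref{independance} freely so that the particular constant $\rho$ and the particular left approximation never matter.

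For the forward direction, assume $\alpha$ is speedable and fix a computable approximation $a_0 < a_1 < \cdots \nearrow \alpha$ together with a computable speed-up function $f$ witnessing $\liminf_n \frac{\alpha - a_{f(n)}}{\alpha - a_n} \le \rho$; by Theorem~\ref{independance} we may pass to a strictly increasing subsequence without losing speedability. For a rational $q$ let $m(q)$ be the least $n$ with $a_n > q$ and set $g(q) = a_{f(m(q))}$. The search for $m(q)$ halts precisely when $q < \alpha$, so $g$ is defined on $(-\infty,\alpha)$; since $m$ and $f$ are nondecreasing and the $a_n$ increase, $g$ is nondecreasing, and $g(q) = a_{f(m(q))} < \alpha$, so $g$ maps the interval into itself. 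To verify~(\ref{eq:equivalent-definition-liminf}), I would feed $g$ rationals $q \nearrow a_{n_k}$ from below, where $(n_k)$ realizes the liminf in the sequential definition. For $q \in [a_{n_k-1}, a_{n_k})$ we have $m(q) = n_k$, hence $g(q) = a_{f(n_k)}$ is constant while $\alpha - q \searrow \alpha - a_{n_k}$, so $\frac{\alpha - g(q)}{\alpha - q}$ can be made arbitrarily close to $\frac{\alpha - a_{f(n_k)}}{\alpha - a_{n_k}}$; letting $k \to \infty$ produces rationals approaching $\alpha$ along which the quotient tends to a value $\le \rho$.

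For the converse, assume such a $g$ is given. I would first manufacture a computable approximation that is fine relative to the remaining distance to $\alpha$: starting from any computable $b_s \nearrow \alpha$ and a small rational $\delta \in (0,\, 1-\rho)$, define $a_{n+1} = a_n + \delta(b_s - a_n)$ for a stage $s$ with $b_s > a_n$, driving $s \to \infty$. This guarantees $a_{n+1} - a_n \le \delta(\alpha - a_n)$, and a short argument shows $a_n \nearrow \alpha$ (otherwise the steps would stay bounded away from $0$). With this approximation in hand I would let $f(n)$ be the running maximum of $n$ and the least $m$ with $a_m \ge g(a_n)$, which is a computable speed-up function satisfying $a_{f(n)} \ge g(a_n)$, hence $\frac{\alpha - a_{f(n)}}{\alpha - a_n} \le \frac{\alpha - g(a_n)}{\alpha - a_n}$.

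The main obstacle is the last step: transferring the hypothesis~(\ref{eq:equivalent-definition-liminf}), which is a liminf over all rationals, to a liminf over the discrete approximation points, and doing so effectively --- certifying that a rational $q$ has small quotient is not semidecidable, since $\frac{x-g(q)}{x-q}$ increases in $x$ toward its value at $\alpha$. The point of the fine approximation is to bypass any search for good rationals: given rationals $q_j \nearrow \alpha$ with $\frac{\alpha - g(q_j)}{\alpha - q_j} \to L \le \rho$, let $a_{n_j}$ be the least approximation point $\ge q_j$. Monotonicity of $g$ gives $g(a_{n_j}) \ge g(q_j)$, while the relative-gap property forces $\frac{\alpha - q_j}{\alpha - a_{n_j}} \le \frac{1}{1-\delta}$, so that $\frac{\alpha - g(a_{n_j})}{\alpha - a_{n_j}} \le \frac{1}{1-\delta}\cdot\frac{\alpha - g(q_j)}{\alpha - q_j}$. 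Taking $j \to \infty$ bounds the liminf along the approximation by $\frac{L}{1-\delta} \le \frac{\rho}{1-\delta} < 1$, so $\alpha$ is $\frac{\rho}{1-\delta}$-speedable and therefore speedable; a final appeal to Theorem~\ref{independance} upgrades this to $\rho$-speedability for the originally prescribed constant and approximation.
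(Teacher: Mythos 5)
Your argument is correct, and the forward direction is essentially the paper's own: both define $g(q)=a_{f(m(q))}$ where $m(q)$ searches the approximation for the first term past $q$, and both read off the liminf by evaluating $g$ at (or arbitrarily close to) the approximation points themselves. The backward direction, however, takes a genuinely different route. The paper never needs a special approximation: it defines the speed-up function by $f(i)=$ the least $n>i$ with $a_n>g(a_{i+1})$, i.e.\ it evaluates $g$ at the \emph{right} endpoint of the cell $[a_i,a_{i+1})$. Monotonicity of $g$ then gives, for every $q$ in that cell, the chain $\frac{\alpha-a_{f(i)}}{\alpha-a_i}\le\frac{\alpha-g(a_{i+1})}{\alpha-q}\le\frac{\alpha-g(q)}{\alpha-q}$, so any good rational $q_j$ transfers its quotient to the index of its cell with no loss in the constant, and the obstacle you identify (non-semidecidability of ``$q$ has a small quotient'') simply never arises. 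You instead resolve that obstacle by manufacturing an approximation with relative gaps $a_{n+1}-a_n\le\delta(\alpha-a_n)$, accepting a multiplicative loss of $\frac{1}{1-\delta}$, and then invoking Theorem~\ref{independance} to restore the constant. Your version costs an extra construction and a final appeal to the independence theorem, but it isolates a reusable device (geometrically fine approximations) and makes the quantitative loss explicit; the paper's version is shorter and constant-preserving, which matters if one wants the equivalence for a \emph{fixed} $\rho$ and approximation without routing through Theorem~\ref{independance}. One small point to make explicit in your write-up: the least $m$ with $a_m\ge g(a_n)$ exists precisely because $g$ maps $(-\infty,\alpha)$ into itself, so $g(a_n)<\alpha$; this is where that hypothesis is used.
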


\begin{remark}
    Note that the characteristic of speedability given in Proposition~\ref{equivalent-definition} does not require the existence of a left-c.e.\ approximation of the real~$\alpha$, thus, can be considered as an extension of the speedability notion on all (i.e.\ not only left-c.e.) reals.
\end{remark}

\begin{proof}
Fix some left approximation~$a_0, a_1, \ldots$ of~$\alpha$. First, assume that~$\alpha$ is speedable. By Theorem~\ref{independance} there is then a computable speed-up function~$f$ that witnesses that~$\alpha$ is $\rho$-speedable with respect to its left approximation~$a_0, a_1, \ldots$ . Let~$g$ be the partial computable function on the set of rational numbers that maps every~$q < \alpha$ to the least index~$i$ such that~$q \le a_i$ and is undefined for all other~$q$. Here we assume that rational numbers are represented in a form such that equality is a computable predicate. Then the partial function~$g$ defined by
\[
g(q)= a_{f(n(q))},
\]
by choice of~$n$ and~$f$, is partial computable, is defined and nondecreasing on the interval~$(-\infty,\alpha)$, and maps this interval to itself. Furthermore, the sequence~$a_0, a_1, \ldots$ witnesses that~\eqref{eq:equivalent-definition-liminf} holds because we have~$g(a_i)= a_{f(i)}$.

Next assume that there is a function~$g$ as stated in the proposition. Then there is a not necessarily computable left approximation~$q_0, q_1, \ldots$ of~$\alpha$ such that we have
\[
\liminf\limits_{j \rightarrow \infty}\frac{\alpha-g(q_j)}{\alpha-q_j}\leq\rho .  
\]
Let~$f$ be the computable speed up function that maps~$i$ to the least index~$n>i$ such that~$g(a_{i+1}) < a_n$. Then, for all~$q$ and~$i$ such that~$q$ is an element of the half-open interval~$[a_{i}, a_{i+1})$, we have
\[
\frac{\alpha-a_{f(i)}}{\alpha-a_{i}} \le
\frac{\alpha-g(a_{i+1})}{\alpha-q} \le
\frac{\alpha-g(q)}{\alpha-q}.
\]
In particular, this chain of inequalities holds true with~$q$ replaced by any of the~$q_j$, which, by choice of the~$q_j$, implies that~$\alpha$ is $\rho$-speedable via its left approximation~$a_0, a_1, \ldots$ and the speed-up function~$f$.
\end{proof}
From Proposition~\ref{equivalent-definition} it is immediate that the equivalent characterization of speedability stated here does not depend on the choice of~$\rho$ in the interval~$(0,1)$. In particular, the characterization holds for some~$\rho$ in this interval if and only if it holds for all~$\rho$ in this interval.

In a same way as the totalizing of translation function for the Solovay reducibility, we can totalize the concept of speedability by requiring the function $g$ from the latter definition to be total.
\begin{definition}
\label{total speedable}
Let~$\rho$ be a real number such that~$0 < \rho <1$. A left-c.e.\ real $\alpha$ is called \defhigh{total $\rho$-speedable} if there exists a nondecreasing computable function $g \colon \mathbb{Q}\mapsto\mathbb{Q}$ that maps every~$q$ in the interval~$(-\infty,\alpha)$ to a value~$g(q) > q$ in this interval and satisfies
\begin{equation}\label{eq:equivalent-definition-liminf-total}
\liminf\limits_{q\nearrow\alpha}\frac{\alpha-g(q)}{\alpha-q}\leq\rho . 
\end{equation}
Such a function $g$ is called \defhigh{total speed-up function}.
\end{definition}

By the following proposition, the total version of speedability does again not depend on the choice of the constant. The proof is omitted due to space considerations.

\begin{proposition}
    Whether a left-c.e.\ real is total $\rho$-speedable does not depend on the choice of $\rho\in(0,1)$.
\end{proposition}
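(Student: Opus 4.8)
The plan is to prove the two directions separately. If $\alpha$ is total $\rho$-speedable and $\rho<\rho''<1$, then the same total speed-up function witnesses total $\rho''$-speedability, since the liminf in~\eqref{eq:equivalent-definition-liminf-total} is then also bounded by $\rho''$; so only the passage to \emph{smaller} constants requires an argument, and it suffices to show that total $\rho$-speedability for one $\rho\in(0,1)$ forces the liminf to be pushable below every threshold, i.e.\ that $\alpha$ admits total speed-up functions with liminf $0$. First I would normalise the given total speed-up function $g$ by replacing it with $q\mapsto\max(g(q),q)$, which is still computable, nondecreasing, agrees with $g$ on $(-\infty,\alpha)$ and satisfies $g(q)\ge q$ everywhere; this guarantees that every finite composition $g^{(N)}$ is again a total speed-up function.

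The central idea is to iterate $g$ a number of times that grows as $q\nearrow\alpha$. For $q<\alpha$ the orbit $q,g(q),g^{(2)}(q),\dots$ stays below $\alpha$ and is nondecreasing, so it converges to some $L(q)\le\alpha$, and the telescoping identity
\[
\frac{\alpha-g^{(N)}(q)}{\alpha-q}=\prod_{i=0}^{N-1}\frac{\alpha-g^{(i+1)}(q)}{\alpha-g^{(i)}(q)}\xrightarrow[N\to\infty]{}\frac{\alpha-L(q)}{\alpha-q}
\]
shows that, along any sequence $q\nearrow\alpha$ whose orbits satisfy $L(q)=\alpha$, a suitably slowly growing computable $N(q)\to\infty$ makes the left-hand side tend to $0$. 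Choosing $g'(q)=g^{(N(q))}(q)$ with $N$ nondecreasing then yields a total speed-up function with liminf $0$, which establishes the proposition. Thus the whole statement reduces to producing, arbitrarily close to $\alpha$, starting points whose $g$-orbit actually converges to $\alpha$ rather than stalling.

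The main obstacle is exactly this possible stalling: the image $g(q)$ of a point $q$ with small ratio $\frac{\alpha-g(q)}{\alpha-q}$ need not itself have small ratio, so the orbit may converge to some $L(q)<\alpha$, in which case iterating this particular $g$ recovers only the factor $\tfrac{\alpha-L(q)}{\alpha-q}\approx\rho$ and nothing smaller. To overcome this I would not insist on the given $g$ but instead chain good jumps with the help of the left approximation $a_0,a_1,\dots\nearrow\alpha$: pass from $g$ to the approximation picture via Proposition~\ref{equivalent-definition}, invoke Theorem~\ref{independance} to obtain a speed-up function on the $a_n$ whose liminf undercuts $\rho'$, and then re-totalise it into a computable $g'\colon\mathbb{Q}\to\mathbb{Q}$. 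The delicate point in this last step is totality: the natural definition $q\mapsto a_{f(n(q))}$, where $n(q)$ is the least index with $q\le a_{n(q)}$, is only partial because the search diverges for $q\ge\alpha$. I would make it total by taking the pointwise maximum with the already total function $g$ and by running the search under a clock tied to the computation, so that every input receives a value while the values for $q<\alpha$ still realise the improved ratios infinitely often; monotonicity and the mapping of $(-\infty,\alpha)$ into itself are then inherited from $g$ and from the monotonicity of the approximation. Verifying that this combined function is genuinely total, nondecreasing, and still attains liminf at most $\rho'$ is the part that needs the most care.
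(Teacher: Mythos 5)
The paper gives no proof of this proposition (it is explicitly omitted ``due to space considerations''), so there is nothing to compare your route against; your sketch has to stand on its own, and as written it has a genuine gap at exactly the point you flag as delicate. Both of your two mechanisms for beating $\rho$ run into the same obstruction, namely that the definition requires the witnessing function to be a \emph{nondecreasing total} map $\mathbb{Q}\to\mathbb{Q}$. For the iteration plan $g'(q)=g^{(N(q))}(q)$: monotonicity of $g'$ needs $N$ to be nondecreasing, while the liminf improvement needs $N(q)\to\infty$ as $q\nearrow\alpha$; but a total computable nondecreasing $N\colon\mathbb{Q}\to\mathbb{N}$ is bounded on $(-\infty,\alpha)$ by its value at any rational above $\alpha$, so $N$ must be bounded there, and a \emph{bounded} number of iterations buys nothing, since the extra telescoping factors are merely $<1$ rather than bounded away from $1$ (this is precisely your ``stalling'' problem, which the bounded iteration does not escape). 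For the re-totalization plan: the pointwise maximum of the monotone $g$ with a clocked search value is \emph{not} monotone. If the clocked search for the index of $q_1$ in the approximation succeeds and returns some $a_{f(n)}$ very close to $\alpha$, while for a slightly larger $q_2$ the clock runs out and you fall back to $g(q_2)$, you may well have $g'(q_2)=g(q_2)<a_{f(n)}=g'(q_1)$. So the claim that monotonicity is ``inherited from $g$ and from the monotonicity of the approximation'' is unjustified; it is the crux, not a routine verification.

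There is a second, independent problem with the clock even if monotonicity were ignored: you need the clocked search to succeed at infinitely many \emph{good} rationals accumulating at $\alpha$. The number of steps needed to locate $q$ inside the left approximation (the least $n$ with $a_n\ge q$) is not computably bounded in any representation of $q$ for $q$ near $\alpha$ --- otherwise $\alpha$ would be computable --- so a generic total clock times out on cofinitely many of the points you need, and ``a clock tied to the computation'' of $g(q)$ has no reason to be long enough at those points. To make this strategy work you would have to choose the approximation and the clock jointly so that the search provably succeeds at infinitely many good points \emph{and} repair monotonicity afterwards, and neither step is addressed. The easy direction (larger $\rho$) and the normalisation $q\mapsto\max(g(q),q)$ are fine, and your diagnosis of why naive iteration fails is exactly right; what is missing is a mechanism for boosting the ratio that is compatible with totality and monotonicity simultaneously.
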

Barmpalias and Lewis-Pye~\cite{barmpalias-lewispye-2017} have shown that speedability implies  Martin-Löf nonrandomness. We currently research the characteristics of the total speed-up function via which the total speedability will imply Schnorr nonrandomness.


\end{document}